\def\Y{{\cal Y}}
\def\I{{\cal I}}
\def\S{{\cal S}}
\def\L{{\cal L}}
\def\S{{\cal S}}
\newcommand{\LHD}{\mathbin{< \hbox to -.415em{}\hbox{\vrule
            height .51em depth .04em}\hbox to .2em{}}}
\newtheorem{thm}{Theorem}[section]
\newtheorem{prop}[thm]{Proposition}
\begin{document}

\title{{\bf On the minimum blocking semioval in $PG(2,11)$}}

\author{Jeremy M. Dover}

\maketitle

\begin{abstract}
A blocking semioval is a set of points in a projective plane that is both a blocking set (i.e., every line meets the set, but the set contains no line) and a semioval (i.e., there is a unique tangent line at each point). The smallest size of a blocking semioval is known for all finite projective planes of order less than 11; we investigate the situation in $PG(2,11)$.
\end{abstract}

\section{Introduction}
In a projective plane $\pi$, a {\bf semioval} is a set of points $S$ such that there is a unique tangent line (i.e., line with one point of contact) at each point. The size of a semioval $\S$ in $PG(2,q)$ is known to satisfy $q+1 \le |\S| \le q\sqrt{q}+1$. As defined by Kiss~\cite{kiss}, a semioval $\S$ is {\em small} if it satisfies $|\S| \le 3q+3$. Several articles, including Kiss, et al.~\cite{kmp} and Bartoli~\cite{bartoli}, investigate the spectrum of sizes for small semiovals in Desarguesian planes.

A set of points $S$ in $\pi$ is called a {\bf blocking set} if every line of $\pi$ meets $S$ in at least one point, but $S$ contains no line. A set of points in $\pi$ is called a {\bf blocking semioval} if it is both a blocking set and a semioval. The smallest possible size of a blocking semioval is known for all planes of order less than 11. In the smallest cases, the author~\cite{dover1} showed there are no blocking semiovals in $PG(2,2)$, and the smallest possible sizes in $PG(2,q)$ for $q=3,4,5,7$ are $6, 9, 11, {\rm and}\; 16$, respectively. In $PG(2,8)$ the author~\cite{dover2} exhibited a blocking semioval of size 19, and showed that no smaller blocking semiovals can exist.

Regarding the planes of order 9, the author~\cite{dover2} showed that no plane of order 9 can have a blocking semioval of size 20 or smaller. Nakagawa and Suetake~\cite{suetake3} had previously shown the existence of blocking semiovals of size 21 in the three non-Desarguesian planes of order 9, and Dover, Mellinger and Wantz~\cite{dmw} exhibited a blocking semioval of size 21 in $PG(2,9)$.

In this article, we continue this program by studying the smallest blocking semioval in $PG(2,11)$.

\section{Some introductory analysis}
To begin scoping the problem, we first appeal to the two best known lower bounds for the size of blocking semiovals. For any projective plane of order $q$, the author~\cite{dover2} shows that a blocking semioval must have at least $2q + \sqrt{2q-\frac{47}{4}}-\frac12$ points. Specializing to the $q=11$ case, we calculate that a blocking semioval in $PG(2,11)$ must have at least 25 points. Since $PG(2,11)$ is Desarguesian, the lower bound of H\'{e}ger and Tak\'{a}ts~\cite[Corollary 33]{hegertakats} also applies, which states that a blocking semioval in $PG(2,q)$ must have at least $\frac94q-3$ points. However, this result provides a bound less than 25, from which we conclude that the smallest possible size for a blocking semioval in $PG(2,11)$ is 25 points.

Let $\S$ be a putative blocking semioval of size 25 in $PG(2,11)$. The author~\cite{dover3} (Proposition 2.1 and Theorem 2.2) shows that $\S$ cannot fully contain a line, nor can it have an $11$-secant. We then apply Theorem 3.1 from Dover~\cite{dover1}, which states that if $\S$ has an (11-$k$)-secant for $1\leq k < 10$, then $\S$ has at least $11 \frac{3k+4}{k+2} - k$ points. Evaluating all of these bounds as $k$ varies from 1 to 10, we find that $\S$ cannot have any 7-secants, 8-secants or 9-secants.

One possibility is that $\S$ could have a 10-secant, in which case Theorem 4.2 in~\cite{dover3} shows that there is only one 10-secant. On the other hand, $\S$ could have no lines meeting it in more than 6 points. We deal with these cases separately.

\section{The 10-secant case}
Suppose that a blocking semioval $\S$ of size 25 in $PG(2,11)$ has a 10-secant $\ell_{10}$. Let $Q$ and $R$ be the two points on $\ell_{10}$ that are not in $\S$. If $P$ is a point of $\S$ not on $\ell_{10}$, then the tangent line to $\S$ through $P$ must pass through either $Q$ or $R$; let $\S_Q$ (resp. $\S_R$) be the set of points in $\S$ not on $\ell_{10}$ whose tangent passes through $Q$ (resp. $R$). If we denote the set of points in $\S$ not on $\ell_{10}$ as $\S'$, we have $\S' = \S_Q \dot\cup \S_R$, so we must have $|\S_Q| + |\S_R| = 15$.

Since $\S$ is a blocking set, every line through $Q$ must meet $\S$ in at least one point. One of these is $\ell_{10}$, but the remaining 11 lines through $Q$ must be covered by the 15 points of $\S'$. The lines through $Q$ and points of $\S_Q$ are tangents to $\S$ and thus contain only a single point of $\S_Q$, and no points of $\S_R$; hence there are $|\S_Q|$ of these. A line $m$ through $Q$ containing a point $X$ of $\S_R$ cannot be a tangent to $\S$, since the tangent to $\S$ at $X$ meets $\ell_{10}$ in $R$. Hence $m$ contains at least 2 points of $\S$, but these points cannot be on $\ell_{10}$, and by the above cannot be in $\S_Q$, so $m$ must contain at least 2 points of $\S_R$.

Since every line through $Q$ is covered by $\S$, we have:
$$ 12 \leq 1 + |\S_Q| + \frac12|\S_R|$$

Using the fact that $|\S_Q| = 15 - |\S_R|$, we can substitute in this inequality to obtain $|\S_Q| \geq 7$. However, the exact same argument holds for $R$, showing $|\S_R| \geq 7$, from which we conclude that one of $|\S_Q|$ and $|\S_R|$ is 7 and the other is 8. Since our choice of labelling $Q$ and $R$ is arbitrary, we assume without loss of generality that $|\S_Q| = 8$ and $|\S_R| = 7$.

Since $|\S_Q|$ contains 8 points, eight lines through $Q$ are tangents to $\S$, and one is $\ell_{10}$, so the remaining three lines through $Q$ must meet $\S_R$ in at least 2 points each.  As $|\S_R| = 7$, this implies that these remaining 3 lines consist of two 2-secants and a 3-secant to $\S$. Similar analysis shows that $R$ lies on $\ell_{10}$, seven tangents and four 2-secants to $\S$.

With a structure this well-defined, we have a reasonable hope that a computer search can identify whether or not a blocking semioval of this form exists in $PG(2,11)$. We develop a search strategy by picking items in a potential blocking semioval in order, using Magma~\cite{magma} at each stage to determine the number of projectively inequivalent options we have that respect previous choices.

We begin by coordinatizing $PG(2,11)$ with homogeneous coordinates such that $Q = (1,0,0)$ and $R = (0,1,0)$. The automorphism group leaving $Q$ and $R$ invariant is transitive on lines through $Q$ other than $\ell_{10}$, thus we may assume that the 3-secant to $\S$ through $Q$ is $[0,1,0]$.

\textbf{Pick two other lines through $Q$ to be 2-secants to $\S$.}  We use Magma to compute the group of automorphisms in $PG(2,11)$ that leave $Q$ and $R$ invariant, as well as the line $[0,1,0]$, and calculate the orbits of pairs of lines through $Q$. As a result of this calculation, we find that we may always take one of our 2-secants through $Q$ to be $[0,1,1]$, and then the other must be one of $[0,1,2],[0,1,3],[0,1,5],[0,1,7],[0,1,10]$, giving 5 possible options for this pair of lines.

\textbf{Pick three points on the 3-secant to be in $\S_R$.} For each of the five selections of non-tangent lines through $Q$ above, we again use Magma to calculate the group of automorphisms that leave each of $Q$, $R$, and the three non-tangent lines through $Q$ fixed. This group is doubly-transitive on the points of the 3-secant through $Q$ distinct from $Q$, allowing us to assume that $(0,0,1)$ and $(1,0,1)$ are in $\S$. However, this is the extent to which the automorphism group can help us, and we must consider each of the other 9 points on this line as candidates to be in $\S$, leaving 9 options for this point set.

\textbf{Pick two points on $[0,1,1]$ to be in $\S_R$.} We have already picked three points on the 3-secant to be in $\S_R$, and the lines joining each of these points to $R$ cannot contain any other point of $\S$. This eliminates three points of $[0,1,1]$ from consideration, as well as $Q$, so there are ${8 \choose 2} = 28$ candidate point pairs.

\textbf{Pick two points on the other 2-secant through $Q$ to be in $\S_R$.} As before, we may not pick points of this line that already lie on lines connecting points of $\S_R$ and $R$, which eliminates 6 possible points; there can be no overlap since all of these lines intersect at $R$. Hence there are ${6 \choose 2} = 15$ candidate point pairs.

\textbf{Pick the points of $\S_Q$.} There will be four remaining lines through $R$ which do not pass through a point of $\S_R$. Order them arbitrarily. On the first, pick two points to be in $\S_Q$, noting that the line between either of these points and $Q$ must be a tangent. As above, this choice affects the number of candidate points for each of the remaining 2-secants through $R$. These point selections can be made in  ${8 \choose 2} {6 \choose 2} {4 \choose 2} = 2520$ ways.

There are $47,638,000 \approx 2^{25.5}$ possible configurations, which is a managable number for computer search. For each of these configurations, we must check if the resulting set, when added to the ten points on $\ell_{10}$ distinct from $Q$ and $R$, forms a blocking semioval. However, the way we've constructed the set ensures that it is a blocking set, and that every point of $\S'$ has a unique tangent line; all that remains to check at this stage is whether there is a unique tangent line at each of the points of $\ell_{10}$ distinct from $Q$ and $R$, or equivalently if each of the points of $\ell_{10}$ distinct from $Q$ and $R$ lies on exactly two lines not blocked by $\S'$, as one of those two will be $\ell_{10}$.

We executed this search in Magma, and we found no blocking semioval of this form. Hence we conclude that there is no blocking semioval of size 25 in $PG(2,11)$ with a 10-secant.

\section{Constraint Programming}
\label{sat_solver}
In order to confirm the results of the previous section, and to set up for further searching, we have developed a constraint programming model to search for blocking semiovals, implemented using Google's OR-Tools~\cite{ortools} Boolean satisfiability (SAT) solver, using Python. For each point $(x,y,z)$, we create a Boolean variable $p\_x\_y\_z$ which is true if $(x,y,z)$ is in our blocking semioval and false otherwise. Mechanically, we also need to create a 0/1-valued integer variable tied to the value of the Boolean (0 for false, 1 for true) to calculate line intersection sizes with the blocking semioval, but for purposes of the model description we will conflate these two variables. Similarly, for each line $[a,b,c]$ we create a Boolean variable $\ell\_a\_b\_c$ which is true if and only if the line is tangent to our blocking semioval.

Given these variable definitions, every blocking semioval corresponds to a solution of the constraint program defined with only three types of constraints:
\begin{eqnarray*}
\sum_{(x,y,z) \in [a,b,c]} p\_x\_y\_z &=& 1\; {\rm if} \; \ell\_a\_b\_c\\
\sum_{(x,y,z) \in [a,b,c]} p\_x\_y\_z &>& 1\; {\rm if} \; \neg\ell\_a\_b\_c\\
\sum_{[a,b,c] \ni (x,y,z)} \ell\_a\_b\_c &=& 1\; {\rm if} \; p\_x\_y\_z\\
\end{eqnarray*}
The first equation type asserts that if $\ell$ is a tangent, then only one of its points lies on the blocking semioval. The second equation type asserts that all non-tangent lines meet the blocking semioval in more than one point. The final equation type asserts that for each point of the blocking semioval, only one of the lines through it is a tangent. Applying these conditions across all points and lines in the plane, we see that any solution of this constraint program is a blocking semioval, and vice versa.

The SAT problem is known to be NP-complete in general, but there are numerous algorithms and implementations which can solve many specific instances. In particular, we have modeled the specific search of the last section using constraint programming by adding the additional conditions derived to our generic model: that the total number of points must be 25, that certain points must be or not be on the blocking semioval, and that certain lines must meet the blocking semioval in 1, 2 or 3 points. By way of comparison, the Magma search conducted in the previous section took approximately 53 minutes to determine that there was no blocking semioval of the claimed form. Google's OR-Tools SAT solver reached the same conclusion in 47 seconds; excerpts of the code are given in Appendix A. Using the constraint programming approach will be particularly valuable for us in the next section.

\section{The 6-secant case}
We have concluded that if $\S$ is a blocking semioval $PG(2,11)$ with 25 points, $\S$ cannot have a 10-secant. Hence if $\S$ exists, no line can meet $\S$ in more than 6 points. Let $x_i$ denote the number of lines of $PG(2,11)$ meeting $\S$ in exactly $i$ points. From~\cite{dover2}, we have the following relations, specialized to $PG(2,11)$:
\begin{eqnarray}
x_2 + x_5 + 3x_6 & = & 123 \label{eq1}\\
x_3 - 3x_5 - 8x_6 & = & -89 \label{eq2}\\
x_4 + 3x_5 + 6x_6 & = & 74 \label{eq3}
\end{eqnarray}

Note that Equation~\ref{eq3}, when reduced modulo 3, shows that $x_4 \equiv 2 \pmod{3}$, and in particular must be at least 2. Hence $3x_5 + 6x_6 \le 72$ by Equation~\ref{eq3}. On the other hand Equation~\ref{eq2} shows that $3x_5 + 8 x_6 \ge 89$. Hence $2 x_6 \ge 17$, from which we conclude $x_6 \geq 9$. We proceed through with a series of small propositions that will gradually restrict the possible structure of $\S$.

\begin{prop}\label{prop1}
Let $\S$ be a blocking semioval in $PG(2,11)$ with 25 points and no 10-secant. Then no point of $\S$ lies on more than three 6-secants to $\S$, and at least four points of $\S$ lie on exactly three 6-secants to $\S$.
\end{prop}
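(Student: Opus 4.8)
The plan is to separate the two assertions: the first is a purely local count inside the pencil of lines through a point of $\S$, and the second is a global double count that feeds off the first together with the bound $x_6 \ge 9$ established just before the statement.

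First I would fix a point $P \in \S$ and analyse the $12$ lines through $P$. Exactly one is the tangent at $P$, and the remaining $11$ are secants; by the hypothesis (no $10$-secant) together with the analysis of Section~2 (no $7$-, $8$-, or $9$-secants, no $11$-secant, no contained line), each of these $11$ secants meets $\S$ in between $2$ and $6$ points. Let $a_i$ be the number of lines through $P$ meeting $\S$ in exactly $i$ points, $2 \le i \le 6$. Counting lines gives $\sum_{i=2}^{6} a_i = 11$, and counting the points of $\S \setminus \{P\}$ (each of which lies on a unique line through $P$) gives $\sum_{i=2}^{6} (i-1)a_i = 24$. Subtracting the first relation from the second yields $a_3 + 2a_4 + 3a_5 + 4a_6 = 13$, and since all $a_i \ge 0$ this forces $a_6 \le 3$. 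As $P$ was arbitrary, no point of $\S$ lies on more than three $6$-secants.

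For the second assertion I would double count the incident pairs $(P,\ell)$ with $P \in \S$ and $\ell$ a $6$-secant to $\S$. Counting by lines there are $6x_6$ such pairs. Counting by points, let $n_j$ be the number of points of $\S$ lying on exactly $j$ $6$-secants; by the first part $n_j = 0$ for $j \ge 4$, so the count is $n_1 + 2n_2 + 3n_3$, while also $n_0 + n_1 + n_2 + n_3 = |\S| = 25$. Using $x_6 \ge 9$, we get $n_1 + 2n_2 + 3n_3 = 6x_6 \ge 54$; subtracting twice the relation $n_0 + n_1 + n_2 + n_3 = 25$ gives $n_3 - n_1 - 2n_0 \ge 4$, hence $n_3 \ge 4 + n_1 + 2n_0 \ge 4$, as required.

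I do not anticipate a genuine obstacle: both parts are forced by elementary counting and parity. The only points requiring care are the bookkeeping in the per-point equation — remembering to exclude the tangent from the $11$ secants and to count each point of $\S \setminus \{P\}$ exactly once across those secants — and correctly invoking the bound $x_6 \ge 9$ derived immediately before the statement from Equations~\eqref{eq2} and~\eqref{eq3}.
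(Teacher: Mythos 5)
Your proof is correct and takes essentially the same approach as the paper: a count of the points of $\S$ over the pencil of lines through $P$ for the first assertion, and the identical double count of point--$6$-secant incidences combined with $x_6 \ge 9$ for the second. The only cosmetic difference is that you package the first part as one uniform linear relation ($a_3 + 2a_4 + 3a_5 + 4a_6 = 13$, hence $a_6 \le 3$), whereas the paper runs two separate contradiction arguments for the cases of five-or-more and exactly four $6$-secants; the underlying inequality is the same.
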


\begin{proof}
Let $\L_6$ be the set of 6-secants to $\S$, and define $y_i$ to be the number of points of $\S$ lying on exactly $i$ 6-secants in $\L_6$. First note that $y_i = 0$ for all $i \ge 5$. Indeed if $P \in \S$ lies on five (or more) 6-secants, then each of these 6-secants contains 5 points of $\S$ distinct from $P$, giving at least $25+1 = 26$ points on $\S$.

We also note that $y_4 = 0$. If $P \in \S$ lies on four 6-secants, then these four 6-secants contain 20 points of $\S$ distinct from $P$. One of the other lines through $P$ is a tangent to $\S$ at $P$, leaving seven additional lines through $P$, each of which must contain at least one other point of $\S$ distinct from $P$. This forces $\S$ to have at least 28 points, again a contradiction. Hence no point of $\S$ can lie on more than three 6-secants in $\L_6$.

Counting points of $\S$ and flags of points of $\S$ lying on lines of $\L_6$, we obtain the following relations on the $y_i$:
\begin{eqnarray*}
y_0 + y_1 + y_2 + y_3 & = & 25\\
y_1 + 2y_2 + 3y_3 & = & 6x_6 \ge 54
\end{eqnarray*}

As all of the $y_i$ are non-negative, we certainly have $50 + y_3 = 2y_0 + 2y_1 + 2y_2 + 2y_3 + y_3 \ge  y_1 + 2y_2 + 3y_3 \ge 54$, from which we derive that $y_3 \ge 4$.
\end{proof}

\begin{prop}\label{prop2}
Let $\S$ be a blocking semioval in $PG(2,11)$ with 25 points and no 10-secant. Then there exists a pair of points $Q,R \in \S$ such that $Q$ and $R$ each lie on three 6-secants to $\S$, and the line $\overline{QR}$ is also a 6-secant to $\S$.
\end{prop}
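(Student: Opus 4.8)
The plan is to argue by contradiction. Suppose that for every pair of points $P,P'$ of $\S$ that \emph{both} lie on exactly three 6-secants, the line $\overline{PP'}$ fails to be a 6-secant. Let $T$ denote the set of points of $\S$ lying on exactly three 6-secants to $\S$ (so in the notation of the proof of Proposition~\ref{prop1}, $|T| = y_3$), and let $\L_6$ be the set of 6-secants to $\S$ (so $|\L_6| = x_6$). The assumption above says precisely that no line of $\L_6$ contains two points of $T$; equivalently, every 6-secant meets $T$ in at most one point.

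Next I would double-count incidences between $T$ and $\L_6$. Each point of $T$ lies on exactly three lines of $\L_6$, so there are $3|T|$ such incidences. On the other hand, each of the $x_6$ lines of $\L_6$ contributes at most one incidence by the contradiction hypothesis, so $3|T| \le x_6$, i.e.\ $3y_3 \le x_6$.

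To close, I would feed in a lower bound for $y_3$ in terms of $x_6$ extracted from the two counting relations already obtained in the proof of Proposition~\ref{prop1}, namely $y_0+y_1+y_2+y_3 = 25$ and $y_1+2y_2+3y_3 = 6x_6$. Subtracting twice the first from the second gives $y_3 = 6x_6 - 50 + 2y_0 + y_1 \ge 6x_6 - 50$. Combining with $3y_3 \le x_6$ yields $3(6x_6-50) \le x_6$, hence $17x_6 \le 150$, so $x_6 \le 8$, contradicting $x_6 \ge 9$. Therefore some 6-secant of $\S$ does contain two points $Q,R$ of $T$; since $Q$ and $R$ determine this line, $\overline{QR}$ is a 6-secant while $Q$ and $R$ each lie on exactly three 6-secants, giving the desired pair.

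I do not anticipate a genuine obstacle here: the argument is essentially a single double count. The one point that needs care is that the crude bound $y_3 \ge 4$ from Proposition~\ref{prop1} is not by itself strong enough to rule out the case $x_6 = 12$, so one must instead carry along the sharper inequality $y_3 \ge 6x_6 - 50$ (or, equivalently, dispatch $x_6 = 12$ by a separate remark). Verifying that $\L_6$ is the same set of 6-secants throughout and that "two points of $T$ on a common 6-secant" is exactly the configuration the proposition asserts is the only bookkeeping required.
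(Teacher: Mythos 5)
Your proof is correct, but it takes a genuinely different route from the paper. The paper argues locally: it fixes four points $P_1,\dots,P_4$ of $\Y_3$ (guaranteed by Proposition~\ref{prop1}), assumes no two of \emph{them} are collinear on a 6-secant, and then analyzes how the 6-secants through $P_2$ and $P_3$ must each meet the 15-point set $\S_1$ in exactly three points, forcing the two resulting 9-point subsets of $\S_1$ to overlap in a point that serves as one member of the desired pair. Your argument is instead a single global double count of incidences between $\Y_3$ and $\L_6$ under the hypothesis that no 6-secant contains two points of $\Y_3$, giving $3y_3 \le x_6$, which you play off against $y_3 \ge 6x_6 - 50$ (correctly extracted from the two flag-counting relations) and $x_6 \ge 9$ to reach $17x_6 \le 150$, a contradiction. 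Your bookkeeping is right, including the observation that the crude bound $y_3 \ge 4$ alone would leave the case $x_6 = 12$ (the maximum permitted by Equation~\ref{eq3}) unresolved, which is exactly why the sharper inequality is needed. Your version is shorter and avoids the delicate intersection analysis; the paper's version needs only the existence of four points of $\Y_3$ and no further information about $x_6$ beyond $x_6 \ge 9$, and it incidentally produces at least three candidate partner points rather than a bare existence statement, though nothing later in the paper exploits that.
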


\begin{proof}
Let $\Y_3$ be the set of points in $\S$ that lie on exactly three 6-secants. Let $P_1$, $P_2$, $P_3$, and $P_4$ be four distinct points of $\Y_3$, which must exist by Proposition~\ref{prop1}. If any pair of the $P_i$ lie together on a 6-secant to $\S$, we are done, so assume otherwise. The three 6-secants through $P_1$ each contains 5 points of $\S$ in addition to $P_1$, and we define $\S_1$ to be this set of 15 points lying on a 6-secant through $P_1$, distinct from $P_1$. Note that by assumption none of $P_2$, $P_3$ or $P_4$ can be in $\S_1$, as otherwise two points of $\Y_3$ would lie together on a 6-secant.

Now consider the three 6-secants through $P_2$. Again, none of these lines can contain any of $P_1$, $P_3$ or $P_4$. Moreover, each of these lines meets $\S_1$ in at most three points, one from each of the 6-secants through $P_1$. Hence each 6-secant through $P_2$ contains at least two points distinct from the $P_i$ and $\S_1$. But there are four $P_i$, and 15 points in $\S_1$. Since $\S$ only has 25 points, each 6-secant through $P_2$ must contain exactly two points distinct from the $P_i$ and $\S_1$, and thus exactly three points of $\S_1$. In particular, exactly nine points of $\S_1$ lie on a 6-secant through $P_1$ and a 6-secant through $P_2$. Call these points $\S_{12}$.

The same analysis holds for the three 6-secants through $P_3$. Hence there are exactly nine points of $\S_1$ that lie on a 6-secant through $P_1$ and a 6-secant through $P_3$, which we denote $\S_{13}$. Since $\S_1$ only has 15 points, its 9-point subsets $\S_{12}$ and $\S_{13}$ must intersect in at least three points, and any point of that intersection is a point of $\S$ that lies on exactly three 6-secants (it cannot lie on more, by Proposition~\ref{prop1}) and lies on a 6-secant together with another point of $\Y_3$. This completes the proof.
\end{proof}



From Proposition~\ref{prop2}, our blocking semioval $\S$ must contain two points $Q$ and $R$ which each lie on three 6-secants, one of which is their common line $n$. Let $\ell_1$ and $\ell_2$ be the other two 6-secants through $Q$ and $m_1$ and $m_2$ be the other two 6-secants through $R$. Define $\I$ to be the set of four points which are the intersections of $\ell_i$ and $m_j$ for $i,j \in \{1,2\}$. Figure~\ref{fig1} provides a graphical depiction of these definitions.

\begin{figure}
\centering
\begin{tikzpicture}
\draw[black,thick] (-6,2) -- (6,2) node[pos=1.05,text=black]{$n$};
\draw[black,thick] (-5.625,2.5) -- (1.25,-3) node[pos=1.05,text=black]{$\ell_1$};
\draw[black,thick] (5.625,2.5) -- (-1.25,-3) node[pos=1.05,text=black]{$m_1$};
\draw[black,thick] (-5.5,2.5) -- (0.5,-3.5) node[pos=1.05,text=black]{$\ell_2$};
\draw[black,thick] (5.5,2.5) -- (-0.5,-3.5) node[pos=1.05,text=black]{$m_2$};
\filldraw[black] (-5,2) circle (4pt) node[anchor=north east]{$Q$};
\filldraw[black] (5,2) circle (4pt) node[anchor=north west]{$R$};
\filldraw[black] (-3,2) circle (4pt);
\filldraw[black] (3,2) circle (4pt);
\filldraw[black] (-1,2) circle (4pt);
\filldraw[black] (1,2) circle (4pt);
\draw[black](0,-2.5) circle [radius=1] node[]{$\I$};
\end{tikzpicture}
\caption{Initial structure of blocking semioval $\S$ based on Proposition~\ref{prop3}}
\label{fig1}
\end{figure}
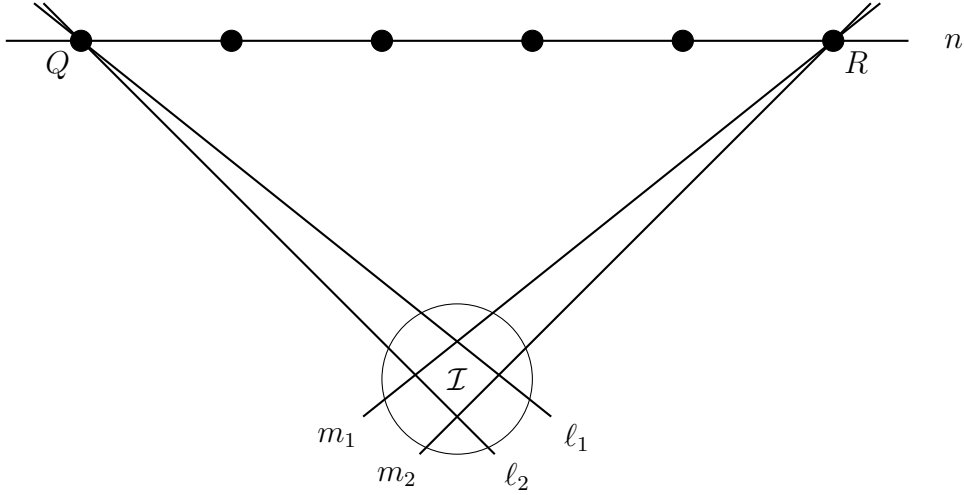

\begin{prop}\label{prop3}
Let $\S$ be a blocking semioval in $PG(2,11)$ with 25 points and no 10-secant. Define the points $Q$ and $R$, the lines $n$, $\ell_1$, $\ell_2$, $m_1$ and $m_2$, and the set $\I$ as above. Then at least three points of $\I$ must lie in $\S$.
\end{prop}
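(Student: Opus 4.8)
The plan is to set $k := |\I \cap \S|$ and to rule out $k \le 2$ by counting 6-secants. The first step is to count exactly how many points of $\S$ lie on the five distinguished 6-secants; write $U = n \cup \ell_1 \cup \ell_2 \cup m_1 \cup m_2$. These five lines are pairwise distinct: $n \ne \ell_i$ and $n \ne m_j$ by construction, $\ell_1 \ne \ell_2$, $m_1 \ne m_2$, and $\ell_i \ne m_j$ since $\ell_i$ passes through $Q$ but not through $R$ (else $\ell_i = \overline{QR} = n$) while $m_j$ passes through $R$. Their only pairwise intersection points that can lie in $\S$ are $Q$ (on $n, \ell_1, \ell_2$), $R$ (on $n, m_1, m_2$), and the points of $\I$; in particular the four points of $n \cap \S$ other than $Q$ and $R$ lie on none of $\ell_1, \ell_2, m_1, m_2$, because $\ell_i \cap n = Q$ and $m_j \cap n = R$. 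Adding $|n\cap\S| = 6$ to the four contributions $|(\ell_i\cap\S)\setminus\{Q\}| = |(m_j\cap\S)\setminus\{R\}| = 5$ and subtracting one for each point of $\I$ lying in $\S$ (each such point being double-counted exactly once, with no higher coincidences off $\{Q,R\}$), I obtain $|U \cap \S| = 26 - k$. Since $|\S| = 25$, exactly $k-1$ points of $\S$ lie off $U$; in particular $k \ge 1$.

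The second step brings in two facts already available in the section: $x_6 \ge 9$, and Proposition~\ref{prop1}, which says that no point of $\S$ lies on more than three 6-secants. The five lines making up $U$ are distinct 6-secants, so there are at least four further 6-secants, each distinct from all five. If $g$ is a 6-secant distinct from each of $n, \ell_1, \ell_2, m_1, m_2$, then $g$ meets each of those lines in exactly one point, so at most five of the six points of $g \cap \S$ lie on $U$; hence $g$ contains at least one point of $\S$ lying off $U$.

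Finally I would combine the two steps. If $k = 1$, then no point of $\S$ lies off $U$, so by the second step no 6-secant other than the five can exist, giving $x_6 = 5 < 9$, a contradiction. If $k = 2$, then exactly one point $P_0 \in \S$ lies off $U$, and the second step forces every one of the (at least four) further 6-secants to pass through $P_0$; hence $P_0$ lies on at least four 6-secants, contradicting Proposition~\ref{prop1}. Therefore $k \ge 3$, as claimed. I expect the only fussy part to be the inclusion--exclusion bookkeeping for $|U \cap \S|$ — specifically, verifying that $Q$, $R$, and the points of $\I$ are the only points lying on two or more of the five lines — after which the 6-secant counting argument is short.
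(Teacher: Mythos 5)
Your proof is correct and takes essentially the same route as the paper: both compute that the five distinguished 6-secants cover $26-k$ points of $\S$ (where $k=|\I\cap\S|$), observe that each of the at least four remaining 6-secants must contain a point of $\S$ off that union, and invoke Proposition~\ref{prop1} to prevent those from all concentrating on one or two such points. Your explicit verification that $Q$, $R$, and the points of $\I$ are the only multiple intersections is bookkeeping the paper leaves implicit, but the argument is the same.
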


\begin{proof}
We begin by counting the number of points in the subset $\S_6$ of $\S$ consisting of the points of $\S$ that lie on our five noted 6-secants. There are six points on $n$, and then each of $\ell_1$, $\ell_2$, $m_1$ and $m_2$ contributes five additional points (not counting $Q$ and $R$), with the points of $\I$ counted twice. Hence $\S_6$ contains $26 - |\I|$ points, showing immediately that $\I$ contains at least one point of $\S$.

But recall that $\S$ must have at least nine 6-secants. Except for the five 6-secants defining $\S_6$, no 6-secant can contain more than five points of $\S_6$. So all of the additional 6-secants, of which there are at least four, must contain a point of $\S$ that is not in $\S_6$. However, they cannot all contain the same point off $\S_6$, since by Proposition~\ref{prop1} a point of $\S$ can lie on at most three 6-secants. So there must be at least two points of $\S$ not in $\S_6$, forcing $\S_6$ to contain at most 23 points, from which we conclude that at least three points of $\I$ lie in $\S$.
\end{proof}

Define $\S^*$ to be the set of points in $\S$ that lie on one or more of the lines $n$, $\ell_1$, $\ell_2$, $m_1$ and $m_2$. Notice that the configuration given in Figure~\ref{fig1} shows five 6-secants to our putative blocking semioval $\S$ (as well as $\S^*$); but recall that $\S$ must have at least nine 6-secants. As the points of $\S^*$ are on the union of five lines, any 6-secant to $\S$ not shown must contain at least one point not in $\S^*$. On the other hand, $\S^*$ contains either 22 or 23 of the points of the blocking semioval, depending on whether $\I$ has 4 or 3 points in $\S$, respectively. Let's address the latter case first.

\begin{prop}\label{propI=3}
Let $\S$ be a blocking semioval in $PG(2,11)$ with 25 points and no 10-secant. Define the points $Q$ and $R$, the lines $n$, $\ell_1$, $\ell_2$, $m_1$ and $m_2$, and the set $\I$ as above. If $\I$ has three points in $\S$, then there exists a point $P \in \S \setminus \S^*$ which lies on at least two 6-secants to $\S$ such that neither of these 6-secants contains any point of $\S \setminus \S^*$ except $P$.
\end{prop}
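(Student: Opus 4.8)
The plan is to use that, under the hypothesis $|\I \cap \S| = 3$, the set $\S^*$ has exactly $23$ points (as noted just before the statement), so $\S \setminus \S^*$ consists of exactly two points, which I will call $P_1$ and $P_2$; moreover neither lies on any of $n, \ell_1, \ell_2, m_1, m_2$, so none of the five given $6$-secants passes through $P_1$ or $P_2$. First I would record the observation already made in the text: a $6$-secant $g$ distinct from the five given lines meets each of them in exactly one point, hence meets their union in at most five points, so at least one of the six points of $g$ lies off $\S^*$, i.e. in $\{P_1, P_2\}$. Since $x_6 \ge 9$ while only five $6$-secants are accounted for so far, there are at least four further $6$-secants, each through $P_1$ or $P_2$.

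Then I would do a short counting argument. Let $a$ (resp.\ $b$) be the number of $6$-secants through $P_1$ (resp.\ $P_2$), and let $\epsilon \in \{0,1\}$ be $1$ exactly when the line $\overline{P_1P_2}$ is a $6$-secant. Because the only line through both $P_1$ and $P_2$ is $\overline{P_1P_2}$, inclusion--exclusion gives that the number of $6$-secants other than the five given ones is $a + b - \epsilon$, so $a + b - \epsilon \ge 4$; Proposition~\ref{prop1} gives $a \le 3$ and $b \le 3$. A $6$-secant through $P_1$ contains a point of $\S \setminus \S^*$ other than $P_1$ precisely when it equals $\overline{P_1P_2}$, so the number of $6$-secants through $P_1$ meeting $\S \setminus \S^*$ only in $P_1$ is $a - \epsilon$, and similarly $b - \epsilon$ for $P_2$. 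If both $a - \epsilon \le 1$ and $b - \epsilon \le 1$, then $a + b \le 2 + 2\epsilon$, which together with $a + b \ge 4 + \epsilon$ forces $\epsilon \ge 2$, a contradiction. Hence $a - \epsilon \ge 2$ or $b - \epsilon \ge 2$, and the corresponding point $P_1$ or $P_2$ is the point $P$ claimed in the statement.

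I do not expect a real obstacle here: the configuration is already pinned down tightly enough by Propositions~\ref{prop1}--\ref{prop3} that the result reduces to this counting. The only point needing care is the bookkeeping around $\overline{P_1P_2}$ --- verifying that ``$6$-secant through $P_i$ avoiding the other point of $\S\setminus\S^*$'' and ``$6$-secant through $P_i$ other than $\overline{P_1P_2}$'' describe the same lines, which is immediate once one knows $\S \setminus \S^* = \{P_1, P_2\}$ has exactly two elements, together with the inclusion--exclusion count of the ``extra'' $6$-secants.
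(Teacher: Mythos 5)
Your proposal is correct and follows essentially the same route as the paper: both note that $\S\setminus\S^*$ has exactly two points, that each of the at least four additional $6$-secants must pass through one of them, that at most one such line (their common line) can contain both, and then apply a pigeonhole count to extract the desired point $P$. Your $\epsilon$-bookkeeping is just a more formal rendering of the paper's one-line pigeonhole step.
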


\begin{proof}
Since $\I$ has three points in $\S$, $\S^*$ contains 23 points of $\S$, leaving two points of $\S$ not in $\S^*$, which we call $A$ and $B$. Since $\S$ has at least nine 6-secants and only five of these contain points strictly within $\S^*$, there must be at least four 6-secants to $\S$ which contain at least one of $A$ or $B$. It is possible that the line containing $A$ and $B$ is a 6-secant to $\S$, but this leaves three 6-secants to $\S$ that contain exactly one of $A$ or $B$. The pigeonhole principle completes the proof, as one of these two points must lie on two such 6-secants.
\end{proof}

Though a seemingly simple result, Proposition~\ref{propI=3} is incredibly powerful from a computational perspective. In order to set up for a search, let us coordinatize $PG(2,11)$ such that the points of $\I$ are $\ell_2 \cap m_2 = (1,0,0)$, $\ell_2 \cap m_1 = (0,1,0)$, $\ell_1 \cap m_2 = (0,0,1)$ and $\ell_1 \cap m_1 = (1,1,1)$, with $(1,1,1)$ not in $\S$. This forces $Q=(1,1,0)$ and $R=(1,0,1)$ to also be in $\S$. Any automorphism which fixes this configuration must leave $(1,1,1)$ and $(1,0,0)$ fixed, but it could permute $(0,1,0)$ and $(0,0,1)$, thus we do have a non-trivial automorphism group of order 2 which we can use to narrow the search space for our point $P \in \S$ whose existence is guaranteed by Proposition~\ref{propI=3}. Using Magma, we find that there are 45 orbits of candidate points for $P$.

Once $P$ is chosen, we need to pick two lines through it to be 6-secants. But in order to be 6-secants, these lines must meet each of $n$, $\ell_1$, $\ell_2$, $m_1$ and $m_2$ in ${\bf distinct}$ points, which must all be in $\S$. Thus we need to pick two lines through $P$ out of 6 or 7 candidates (depending on whether or not $P$ lies on a line containing two points of $\I$), which can be done in either 15 or 21 ways. Running out all the possibilities with Magma, we find that there are 760 possible configurations for $P$ and its two 6-secants. Combined with the 5 points we coordinatized above, we have 760 starting configurations, each of which provides 16 out of 25 points on a putative blocking semioval. This puts us in the realm of easy computation.

For each of these 760 starting configurations, we create a constraint programming model as in Section~\ref{sat_solver}. Starting with the basic relations used to define a blocking semioval, we add the additional constraints:
\begin{enumerate}
\item the total number of points on the blocking semioval is 25;
\item the points $\{(1,0,0),(0,1,0),(0,0,1),(1,1,0),(1,0,1)\}$ are in the blocking semioval;
\item the point $(1,1,1)$ is not in the blocking semioval;
\item the point $P$ is in the blocking semioval; and
\item the points of intersection of the two lines chosen to be 6-secants to the blocking semioval with each of $\ell_1$, $\ell_2$, $m_1$, $m_2$ and $n$ are in the blocking semioval.
\end{enumerate}

The SAT solver runs through these cases at a rate of roughly two per second, and in each case determined that the model was infeasible, from which we can conclude that there is no blocking semioval of the form discussed here with just three points of $\I$ in the blocking semioval. Now we follow a similar program to  Proposition~\ref{propI=3} for the case where all four points of $\I$ lie in $\S$.

\begin{prop}\label{propI=4}
Let $\S$ be a blocking semioval in $PG(2,11)$ with 25 points and no 10-secant. Define the points $Q$ and $R$, the lines $n$, $\ell_1$, $\ell_2$, $m_1$ and $m_2$, and the set $\I$ as above. If all four points of $\I$ lie in $\S$, then there exists a point $P \in \S \setminus \S^*$ which lies on at least two 6-secants to $\S$ such that at least one of these 6-secants contains no point of $\S \setminus \S^*$ except $P$.
\end{prop}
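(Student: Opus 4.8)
The plan is to follow the same pattern as the proof of Proposition~\ref{propI=3}, but now tracking the three points of $\S\setminus\S^*$ instead of two. As already noted, when all four points of $\I$ lie in $\S$ the set $\S^*$ contains exactly $22$ points of $\S$, so $\S\setminus\S^*$ consists of exactly three points; call them $A$, $B$, $C$. None of these lies on any of $n,\ell_1,\ell_2,m_1,m_2$, directly from the definition of $\S^*$. Since $\S$ has at least nine $6$-secants, only five of which lie inside $\S^*$, and since every other $6$-secant meets the union of those five lines in at most five points and so must contain a point of $\S\setminus\S^*$, there are at least $9-5=4$ $6$-secants of $\S$ through $\{A,B,C\}$; write $N$ for the number of such lines, so $N\ge 4$.

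Call a $6$-secant of $\S$ through exactly one of $A,B,C$ \emph{clean} and one through two of them \emph{dirty} --- the only candidates for dirty $6$-secants are the lines $\overline{AB},\overline{AC},\overline{BC}$, or the single line through $A,B,C$ if these three points are collinear. Suppose, for contradiction, that the proposition fails; then for every $X\in\{A,B,C\}$ lying on at least two $6$-secants, all of those $6$-secants are dirty. Say $X$ is \emph{free} if some $6$-secant through $X$ is clean. The failure assumption forces a free point to lie on exactly one $6$-secant (necessarily the clean one), and in particular no dirty $6$-secant can pass through a free point. Letting $k$ be the number of free points among $A,B,C$, we then have exactly $k$ clean $6$-secants (one per free point, each passing through a single free point), and at most $\binom{3-k}{2}$ dirty $6$-secants, since a dirty $6$-secant joins two of the $3-k$ non-free points. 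Hence $N\le k+\binom{3-k}{2}$, which is at most $3$ for every $k\in\{0,1,2,3\}$, contradicting $N\ge4$. (If $A,B,C$ happen to be collinear the bound on the number of dirty $6$-secants improves to $1$, and the same contradiction is reached even more directly.)

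I do not expect a serious obstacle here: the argument is essentially the pigeonhole/counting bookkeeping already carried out for Proposition~\ref{propI=3}, and the only care needed is to verify that dirty $6$-secants genuinely avoid the free points and that the collinear configuration is not overlooked. Once the proposition is in hand, the intended use mirrors the three-point-of-$\I$ case: coordinatize so that $\I$ and the resulting forced points of $\S$ are fixed, enumerate the (few) orbits of candidate points $P$ together with the at-most-two additional $6$-secants through $P$, and hand each resulting partial configuration to the constraint solver of Section~\ref{sat_solver} to rule out a size-$25$ blocking semioval of this shape.
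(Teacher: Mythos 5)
Your argument is correct and rests on the same combinatorial facts as the paper's proof: there are exactly three points of $\S\setminus\S^*$, at least four extra $6$-secants must pass through them, and at most $\binom{3}{2}=3$ lines can contain two or more of them. The paper reaches the conclusion by a direct two-step pigeonhole (first locate a ``clean'' $6$-secant, then handle the case where its point carries no second $6$-secant), whereas you package the same counting as a contradiction via the free/clean/dirty classification; both are valid and essentially equivalent.
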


\begin{proof}
Since all four points of $\I$ lie in $\S$, $\S^*$ contains 22 points of $\S$, leaving three points of $\S$ not in $\S^*$. As before $\S$ has at least nine 6-secants and only five of these contain points strictly within $\S^*$, so there must be at least four 6-secants to $\S$ which contain at least one point not in $\S^*$. Since there are only three points in $\S \setminus \S^*$, at most three of our four 6-secants can contain more than one point of $\S \setminus \S^*$, so there must exist a point $P \in \S \setminus \S^*$ which lies on a 6-secant that contains no other point of $\S \setminus \S^*$.

If there is another 6-secant through $P$ we are done. If no other 6-secant passes through $P$, then the remaining three 6-secants must pass through either of $A$ or $B$, the two points of $\S \setminus \S^*$ distinct from $P$. Again using the pigeonhole principle, at least one of these points (say $A$) must lie on at least two 6-secants, and at most one of those 6-secants can contain another point (necessarily $B$) of $\S \setminus \S^*$. In this case $A$ lies on at least two 6-secants, at least one of which contains no other point of $\S \setminus \S^*$, proving the claim.
\end{proof}

While not as powerful as Proposition~\ref{propI=3}, it gets the job done. We again coordinatize $PG(2,11)$ as above, but in this case all of $(1,0,0)$, $(0,1,0)$, $(0,0,1)$ and $(1,1,1)$ are in $\S$, as well as $(1,1,0)$ and $(1,0,1)$. There is an automorphism group of order 8 that fixes $\{(1,1,0),(1,0,1)\}$ and $\{(1,0,0),(0,1,0),(0,0,1),(1,1,1)\}$ as sets, which allows us to narrow down the choice of our point $P$, whose existence is guaranteed by Proposition~\ref{propI=4}, to one of 15 possible orbits. We then choose a line to be the 6-secant which contains no other point of $\S \setminus \S^*$, which we can pick in 6 or 7 ways, as described above. However, our remaining 6-secant could be any other line through $P$ except $\ell_{10}$, yielding 1,056 cases.

For each of these 1,056 starting configurations, we again create a constraint programming model. In addition to the basic relations used to define a blocking semioval, we add:
\begin{enumerate}
\item the total number of points on the blocking semioval is 25;
\item the points $\{(1,0,0),(0,1,0),(0,0,1),(1,1,1),(1,1,0),(1,0,1)\}$ are in the blocking semioval;
\item the point $P$ is in the blocking semioval;
\item the points of intersection of the first line chosen to be a 6-secant to the blocking semioval with each of $\ell_1$, $\ell_2$, $m_1$, $m_2$ and $n$ are in the blocking semioval; and
\item the second line chosen to be a 6-secant to the blocking semioval contains six points of the blocking semioval.
\end{enumerate}

The SAT solver resolves each of these case in approximately 4 seconds, and again determined that each case was infeasible. This allows us to state our main result:

\begin{thm}
There is no blocking semioval with exactly 25 points in $PG(2,11)$.
\end{thm}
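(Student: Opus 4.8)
The plan is to assemble the results of the preceding sections into a single, short case analysis. First I would recall from Section~2 that the two best-known lower bounds force any blocking semioval in $PG(2,11)$ to have at least $25$ points, and that the secant-line bound of~\cite{dover1}, combined with the results of~\cite{dover3}, rules out $7$-, $8$- and $9$-secants (as well as a fully contained line and an $11$-secant). Consequently a putative blocking semioval $\S$ of size exactly $25$ either possesses a $10$-secant or else has no line meeting it in more than six points. These two alternatives are mutually exclusive and jointly exhaustive, so it suffices to eliminate each.

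For the $10$-secant case I would invoke the structural analysis opening Section~3: the two non-$\S$ points $Q,R$ on the $10$-secant partition the remaining fifteen points as $\S' = \S_Q \dot\cup \S_R$, the blocking condition at $Q$ and at $R$ forces $\{|\S_Q|,|\S_R|\} = \{7,8\}$, and then the line-type distribution through each of $Q$ and $R$ is completely determined (a $3$-secant, two $2$-secants and eight tangents through $Q$; four $2$-secants and seven tangents through $R$). The finite search described there — organized as a chain of projectively-inequivalent choices of lines and points totalling about $2^{25.5}$ configurations, each tested for the tangency condition at the ten interior points of $\ell_{10}$ — returns no blocking semioval; this is cross-checked by the independent OR-Tools SAT model of Section~\ref{sat_solver}, closing this branch.

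For the remaining case, in which no line is longer than a $6$-secant, I would chain Propositions~\ref{prop1}--\ref{prop3}: Proposition~\ref{prop1} guarantees that at least four points of $\S$ lie on exactly three $6$-secants; Proposition~\ref{prop2} extracts from them a pair $Q,R$ whose common line $n$ is itself a $6$-secant; and Proposition~\ref{prop3} forces at least three of the four points of $\I$ to lie in $\S$. This leaves precisely the two subcases $|\I \cap \S| = 3$ and $|\I \cap \S| = 4$. In the first, Proposition~\ref{propI=3} supplies, after the coordinatization fixing $\I \cup \{Q,R\}$, a distinguished point $P \in \S\setminus\S^*$ together with two $6$-secants through it meeting no further point of $\S\setminus\S^*$, reducing to $760$ partial configurations of sixteen points each; in the second, Proposition~\ref{propI=4} yields a point $P$ with one such $6$-secant and one further $6$-secant, reducing to $1{,}056$ configurations. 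Feeding each of these into the constraint-programming model of Section~\ref{sat_solver}, augmented with the listed constraints, the solver reports every instance infeasible. Since both branches of the case split are now empty, no blocking semioval of size $25$ exists, which is the theorem.

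A remark on where the weight of the argument lies. The purely combinatorial reductions (Propositions~\ref{prop1}--\ref{propI=4}) are short, so the real content, and the main point of vulnerability, is the completeness and correctness of the computer searches. The key things I would want to be certain of are: (i) that the Magma orbit computations genuinely enumerate \emph{all} projectively-inequivalent starting configurations at each stage, so that no case is silently omitted; (ii) that the case split is exhaustive — in particular that Propositions~\ref{propI=3} and~\ref{propI=4} together cover every possibility once $|\I\cap\S|$ is fixed, and that the pigeonhole steps therein are tight; and (iii) that the two independent implementations agree on the $10$-secant case. I expect verifying the exhaustiveness of the reduction feeding the SAT solver to be the step demanding the most care, since everything downstream of it is mechanical.
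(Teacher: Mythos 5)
Your proposal follows the paper's own argument essentially step for step: the same lower-bound and secant-exclusion reductions, the same split into the 10-secant case (eliminated by the Magma search cross-checked with the SAT model) and the 6-secant case (eliminated via Propositions~\ref{prop1}--\ref{propI=4} feeding the 760 and 1{,}056 constraint-programming instances). The approach and the points you flag as needing care (exhaustiveness of the orbit enumerations and of the case split) match the paper's treatment, so there is nothing further to add.
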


\section{Conclusion}

As part of testing our constraint programming code to double check the 10-secant case, we commented out the constraint forcing the number of points in the blocking semioval to be 25, assuming the code would find a vertexless triangle or some other known blocking semioval. Instead, we found a blocking semioval in $PG(2,11)$ with 26 points, well smaller than the 29 points contained in the smallest previously known blocking semioval in that plane. The coordinates for the points of this blocking semioval are given in Table~\ref{T2}.
\begin{table}[!h]
\centering
\caption{Points of a 26-point semioval in $PG(2,11)$}
\begin{tabular}{ c c c c c c c c c }
$(1,1,0)$ & $(1,2,0)$ & $(1,3,0)$ & $(1,4,0)$ & $(1,5,0)$ & $(1,6,0)$ & $(1,7,0)$ & $(1,8,0)$ & $(1,9,0)$\\
$(1,10,0)$ & $(0,0,1)$ & $(1,0,2)$ & $(1,0,6)$ & $(1,0,7)$ & $(1,0,8)$ & $(1,0,10)$ & $(1,1,3)$ & $(1,2,3)$\\
$(1,3,5)$ & $(1,4,4)$ & $(1,5,9)$ & $(1,6,5)$ & $(1,7,1)$ & $(1,8,4)$ & $(1,9,1)$ & $(1,10,9)$\\
\end{tabular}
\label{T2}
\end{table}

The stabilizer $G$ of this blocking semioval has order 5, and has three fixed points $(1,0,0)$, $(0,1,0)$ and $(0,0,1)$. It is generated by a collineation induced by the matrix:
$$
\left(\begin{matrix}
1 & 0 & 0\\
0 & 9 & 0\\
0 & 0 & 4
\end{matrix}\right)
$$

The three lines of the triangle generated by the fixed points are a 10-secant ($[0,0,1]$), a tangent ($[1,0,0]$, at $(0,0,1)$), and a 6-secant which contains $(0,0,1)$ and a 5-point orbit under $G$. Using this combinatorial structure and an analogous group structure, we attempted to generalize this blocking semioval to larger planes, particularly $PG(2,19)$, using a fairly straightforward exhaustion in Magma using all possible pairs of squares in $GF(19)$ to define our automorphism group. This search was unsuccessful; however, analyzing the list of blocking semiovals in $PG(2,7)$~\cite{ranson} shows that the unique smallest blocking semioval in that plane, of size 16, has an analogous automorphism group and combinatorial structure.

Table~\ref{T1} provides a summary of the size of the smallest blocking semiovals in small Desarguesian planes, where these values are known.

\begin{table}[!h]
\centering
\caption{Sizes of smallest known blocking semiovals}
\begin{tabular}{|c|c||c|c|}
\hline
Plane & Minimum Size & Plane & Minimum Size\\ \hline
$PG(2,2)$ & none & $PG(2,3)$ & 6 \\ \hline
$PG(2,4)$ & 9 & $PG(2,5)$ & 11 \\ \hline
$PG(2,7)$ & 16 & $PG(2,8)$ & 19 \\ \hline
$PG(2,9)$ & 21 & $PG(2,11)$ & 26 \\ \hline
\end{tabular}
\label{T1}
\end{table}

\bibliographystyle{plain}

\section*{Appendix A: Constraint programming primitives}
The constraint program needed to define the incidences in $PG(2,11)$ is rather lengthy; we actually developed a Magma program to write the constraint program. Here we provide some excerpts from the constraint programs used in this paper. The code here is excerpted from a Python program using Google's OR Tools library. Note that sums with ellipses continue over all points on a line or lines through a point, and are redacted for readability considerations.
\begin{Verbatim}
###
#Variables describing a point
###
# Boolean is true/false as (1,0,0) is in/not in the blocking semioval
p_1_0_0 = model.NewBoolVar("point_1_0_0")
# Integer value of the Boolean variable
p_1_0_0_int = model.NewIntVar(0,1,"P![1,0,0]")
#Constraints to tie the Boolean and integer variables together
model.Add(p_1_0_0_int == 1).OnlyEnforceIf(p_1_0_0)
model.Add(p_1_0_0_int == 0).OnlyEnforceIf(p_1_0_0.Not())

###
#Variables describing a line
###
# Boolean is true/false as [1,0,0] is/is not tangent to blocking semioval
l_1_0_0 = model.NewBoolVar("line_1_0_0")
# Integer value of the Boolean variable
l_1_0_0_int = model.NewIntVar(0,1,"line_1_0_0_int")
#Constraints to tie the Boolean and integer variables together
model.Add(l_1_0_0_int == 1).OnlyEnforceIf(l_1_0_0)
model.Add(l_1_0_0_int == 0).OnlyEnforceIf(l_1_0_0.Not())

###
#Blocking semioval constraints
###
# Only one point on [1,0,0] is on the blocking semioval if it is a tangent
model.Add(p_0_1_0_int+p_0_1_1_int+ ... == 1).OnlyEnforceIf(l_1_0_0)
# More than one point on [1,0,0] is on the blocking semioval if not
model.Add(p_0_1_0_int+ ... > 1).OnlyEnforceIf(l_1_0_0.Not())
# Only one line through (1,0,0) is tangent to the blocking semioval
model.Add(l_0_1_0_int+l_0_1_1_int+ ... == 1).OnlyEnforceIf(p_1_0_0)

###
#Additional constraints (not necessarily used simultaneously)
###
# Force [0,0,1] to be a 6-secant
model.Add(p_1_0_0_int+p_1_1_0_int+p_0_1_0_int+ ... == 6)
# Assert (1,0,0) is in the blocking semioval, while (1,1,1) is not
model.AddBoolAnd([p_1_0_0,p_1_1_1.Not()])
# Assert the blocking semioval has 25 points
model.Add(p_1_0_0_int+p_0_1_0_int+p_0_0_1_int+ ... == 25)

\end{Verbatim}

\end{document}